\title[]{Reconstruction of general elliptic K3 surfaces from their Gromov--Hausdorff limits}
\author{Kenji Hashimoto}
\address{
Graduate School of Mathematical Sciences,
The University of Tokyo,
3-8-1 Komaba,
Meguro-ku,
Tokyo,
153-8914,
Japan.}
\email{hashi@ms.u-tokyo.ac.jp}
\author[K.~Ueda]{Kazushi Ueda}
\address{
Graduate School of Mathematical Sciences,
The University of Tokyo,
3-8-1 Komaba,
Meguro-ku,
Tokyo,
153-8914,
Japan.}
\email{kazushi@ms.u-tokyo.ac.jp}
\date{}
\begin{document}

\maketitle

\begin{abstract}
We show that a general elliptic K3 surface with a section
is determined uniquely by its discriminant,
which is a configuration of 24 points on the projective line.
It follows that a general elliptic K3 surface with a section
can be reconstructed from its Gromov--Hausdorff limit
as the volume of the fiber goes to zero.
\end{abstract}

\section{Introduction}
 \label{sc:introduction}

Let $f, g \in \bC[z,w]$ be homogeneous polynomials
of degree 8 and 12 respectively,
and define a polynomial $h$ of degree 24 by
\begin{align} \label{eq:def_h}
 h = f^3 + g^2.
\end{align}
We show the following in this paper:

\begin{theorem} \label{th:main}
If $f$ and $g$ are general,
then the decomposition of $h$
into the sum of a cube and a square is unique,
up to the obvious ambiguity
of multiplication by a cubic and a square root of unity. 
\end{theorem}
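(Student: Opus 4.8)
The plan is to view the assignment $(f,g)\mapsto h=f^{3}+g^{2}$ as a morphism $\Phi$ from the $22$-dimensional space of pairs (a degree-$8$ form has $9$ coefficients, a degree-$12$ form $13$) to the $25$-dimensional space of binary forms of degree $24$, and to show that the fibre of $\Phi$ through a general point is a single orbit of the group $G=\mu_{3}\times\{\pm1\}$ acting by $(f,g)\mapsto(\zeta f,\epsilon g)$ with $\zeta^{3}=1$, $\epsilon^{2}=1$; this is precisely the ``obvious ambiguity,'' and $\Phi$ is $G$-invariant. As a first step I would check that $\Phi$ is generically finite with reduced fibres by showing its differential is injective at a general pair. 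From $d\Phi_{(f,g)}(\dot f,\dot g)=3f^{2}\dot f+2g\dot g=0$ and the coprimality of a general $f,g$ one gets $f^{2}\mid\dot g$; since $\deg\dot g=12<16=\deg f^{2}$ this forces $\dot g=0$ and then $\dot f=0$. Hence each orbit point is an isolated, reduced point of the fibre, and it remains to prove there are no further points.

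So suppose $h=f^{3}+g^{2}=\tilde f^{3}+\tilde g^{2}$. Subtracting and factoring gives the central identity
\begin{equation*}
\prod_{j=0}^{2}\bigl(\tilde f-\omega^{j}f\bigr)=\tilde f^{3}-f^{3}=g^{2}-\tilde g^{2}=(g-\tilde g)(g+\tilde g),
\end{equation*}
where $\omega$ is a primitive cube root of unity. If some factor $\tilde f-\omega^{j}f$ vanishes then $\tilde f=\omega^{j}f$, hence $\tilde g^{2}=g^{2}$ and $\tilde g=\pm g$, a point of the $G$-orbit; likewise if $g=\pm\tilde g$. Thus I may assume the three degree-$8$ forms $A_{j}:=\tilde f-\omega^{j}f$ and the two degree-$12$ forms $B_{\pm}:=g\pm\tilde g$ are all nonzero. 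After a genericity analysis ensuring that $N:=\tilde f^{3}-f^{3}$ is squarefree, the $A_{j}$ are pairwise coprime and $B_{+},B_{-}$ are coprime; unique factorisation in $\bC[z,w]$ then makes each linear factor of $B_{\pm}$ divide exactly one $A_{j}$, so $A_{j}=C_{j}D_{j}$ with $C_{j}=\gcd(A_{j},B_{+})$, $D_{j}=\gcd(A_{j},B_{-})$, subject to $\deg C_{j}+\deg D_{j}=8$ and $\sum_{j}\deg C_{j}=\sum_{j}\deg D_{j}=12$.

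The heart of the matter, and the step I expect to be the main obstacle, is to rule out every such ``mixed'' distribution of the $24$ roots. The extra rigidity comes from the fact that $A_{0},A_{1},A_{2}$ span only the two-dimensional pencil $\langle f,\tilde f\rangle$, so they satisfy the linear syzygy $A_{0}+\omega A_{1}+\omega^{2}A_{2}=0$, whereas $B_{+},B_{-}$ lie in the independent pencil $\langle g,\tilde g\rangle$. Geometrically, the $24$ discriminant points form simultaneously the fibre $\pi_{V}^{-1}\{1,\omega,\omega^{2}\}$ of the degree-$8$ map $\pi_{V}=[\tilde f:f]$ and the fibre $\pi_{W}^{-1}\{\pm1\}$ of the degree-$12$ map $\pi_{W}=[\tilde g:g]$, and I would argue that for a general configuration of these $24$ points such incompatible pencil structures cannot coexist unless $\pi_{V}$ is constant, i.e. unless $\tilde f$ is a scalar multiple of $f$; the syzygy together with the degree bookkeeping above should contradict the general position of the roots of $h$ in every mixed case. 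The delicate auxiliary points are: controlling possible drops in degree (cancellation of leading terms in $A_{j}$ or $B_{\pm}$), justifying that an alternative solution inherits enough general position to make $N$ squarefree, and disposing of the scalar-multiple cases separately — for instance $\tilde f=cf$ with $c^{3}\neq1$ forces $N=(c^{3}-1)f^{3}=B_{+}B_{-}$, whence each root of $f$ is a triple root of exactly one $B_{\pm}$ and $g=\tfrac12(\lambda_{+}P_{+}^{3}+\lambda_{-}P_{-}^{3})$ with $P_{+}P_{-}\propto f$, a special shape that a general $g$ does not have.
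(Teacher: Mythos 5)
Your reduction to $\prod_{j}(\tilde f-\omega^{j}f)=(g-\tilde g)(g+\tilde g)$, the disposal of the cases $\tilde f=\omega^{j}f$ or $\tilde g=\pm g$, and the injectivity of $d\Phi$ at a general pair are all correct, but the proposal is not a proof: the step you yourself call ``the heart of the matter'' --- ruling out every mixed distribution of roots among the $A_{j}$ and $B_{\pm}$ --- is the entire content of \pref{th:main}, and it is left as an expectation (``should contradict the general position'') with no mechanism supplied. Worse, the genericity you invoke attaches to the wrong objects. The $24$ points you are distributing are the roots of $N=\tilde f^{3}-f^{3}=g^{2}-\tilde g^{2}$, \emph{not} the roots of the discriminant $h=f^{3}+g^{2}$; choosing $(f,g)$ general controls the latter but gives no handle on the former, because $\tilde f,\tilde g$ are unknowns of the problem. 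For the same reason you cannot ``ensure that $N$ is squarefree'' or that the $A_j$ are pairwise coprime: these are properties of the hypothetical second solution. The paper's own example $4(uv)^{3}+(uv(u-v))^{2}=0^{3}+(uv(u+v))^{2}$ shows that non-orbit solutions do occur, and precisely through the degenerate branches your sketch defers ($\tilde f=0$, a drop of degree; $N=-4(uv)^{3}$ non-squarefree; $B_{+}=2u^{2}v$ and $B_{-}=-2uv^{2}$ splitting the multiplicity of each root of $N$), all compatible with your syzygy and degree bookkeeping. So combinatorics alone cannot close the argument; one needs a quantitative way to propagate genericity of $(f,g)$ to $(\tilde f,\tilde g)$, and none is given. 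Note also that your $d\Phi$ computation only shows the fibre of $\Phi$ through a general point is finite and reduced; it gives no bound of $6$ on its cardinality, which is the actual assertion.

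For comparison, the paper sidesteps this entirely by transcendental methods: a decomposition $h=\varphi^{3}+\psi^{2}$ is encoded as a section $(x,y)=(\varphi,\psi)$ of the elliptic surface $y^{2}=-x^{3}+h$ in $\bP(8,12,1,1)$, disjoint from the zero section $\sigma_{0}$, hence as a vector $\tau(\sigma)\in U^{\bot}\subset\Pic X$ with $\tau(\sigma)^{2}=-8$. A Picard number computation ($\rho(X)=4$ for very general $(f,g)$, proved by degenerating to a family with $\bZ/33\bZ$-symmetry and using Griffiths--Dwork together with the local Torelli theorem for elliptic surfaces) forces $\tau(\sigma)$ to lie in $M\otimes\bQ$, where $M$ has Gram matrix $\sqmat{-8}{4}{4}{-8}$, and this lattice contains exactly six vectors $\rho$ with $\rho.M\subset\bZ$ and $\rho^{2}=-8$, namely $\tau_{1},\dots,\tau_{6}$; the Mordell--Weil isomorphism $\Pic(X)/U\cong(M\otimes\bQ)\cap\Pic(X)$ then yields exactly the six expected decompositions. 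Finally, ``very general'' is upgraded to ``general'' by observing that the first projection $Z\to S\times T$ is a morphism of schemes which is generically six-to-one, hence six-to-one off a Zariski closed subset --- a constructibility step your approach would also need but which is moot until the fibre count is established. If you want an algebraic route avoiding Picard numbers, the paper's \pref{rm:local_torelli} points to the proof of \cite[Proposition 2.1]{MR1971517}; but any such route requires genuine input where your sketch currently has only bookkeeping.
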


Note that the corresponding problem in number theory
%of finding a decomposition of an integer
%into the sum of a square and a cube
has non-unique solutions in general.
There are three solutions
\begin{align}
 1^3 + 4^2 &= 2^3 + 3^2, &
 1^3 + 8^2 &= 4^3 + 1^2, &
 2^3 + 9^2 &= 4^3 + 5^2
\end{align}
for positive integers less than 100,
and
\begin{align}
 1^3 + 32^2
  = 4^3 + 31^2
  = 5^3 + 30^2
  = 10^3 + 5^2
\end{align}
is the smallest integer
which can be written as the sum of a cube and a square
in more than two ways.
An elliptic curve of the form
\begin{align}
 y^2 = x^3 + n
\end{align}
for a non-zero integer $n$
is known as a \emph{Mordell curve},
and has been studied for many years.

The decomposition over the function field is not unique
if $f$ and $g$ are not general.
For example,
for homogeneous polynomials $u$ and $v$ of degree 4,
one has
\begin{align}
 4 ( u v )^3 + (u v (u-v))^2
  &= 0^3 + (u v (u+v))^2.
\end{align}

An elliptic K3 surface with a section
has a Weierstrass model
\begin{align}
 y^2 = 4 x^3 - g_8(z,w) x - g_{12}(z,w)
\end{align}
in $\bP(4,6,1,1)$,
where $g_8(z,w)$ and $g_{12}(z,w)$ are homogeneous polynomials
in $z$ and $w$ of degree 8 and 12 respectively.
The discriminant is given by
\begin{align}
 \Delta = g_8^3 - 27 g_{12}^2,
\end{align}
which is a homogeneous polynomial of degree 24.
The decomposition problem asks
if $g_8$ and $g_{12}$ can be reconstructed
from $\Delta$.
Our interest in this problem comes from mirror symmetry.
It is shown in \cite{MR1863732,MR3514555}
that a sequence of K\"ahler--Einstein metrics
on an elliptic K3 surface
with a fixed diameter
converges
in the Gromov--Hausdorff topology
to a sphere with a Monge--Amp\`ere structure with singularities,
if the volume of the fiber goes to zero.
Since the complement of the discriminant of the elliptic K3 surface
as a punctured Riemann sphere
can be reconstructed as the conformal class
of the smooth part of the limit metric,
\pref{th:main} allows the reconstruction of
(the Jacobian of) a general elliptic K3 surface
from the Gromov--Hausdorff limit.

This paper is organized as follows:
In \pref{sc:surfaces},
we associate an auxiliary elliptic surface $X$ of general type
with a polynomial $h = f^3 + g^2\in \bC[z,w]$ of degree 24.
In \pref{sc:rho},
we show that the Picard number of $X$ is 4
if $f$ and $g$ are very general.
In \pref{sc:proof},
we give a proof of \pref{th:main}.
In \pref{sc:mirror},
we discuss the relationship with mirror symmetry for K3 surfaces.

\subsection*{Acknowledgements}
We thank Yuichi Nohara for an unpublished collaboration
on Gromov--Hausdorff compactifications
of moduli of K3 surfaces
which led to the present work,
and Yuji Odaka and Yoshiki Oshima
for exchange of ideas and results
on Gromov--Hausdorff compactifications
of moduli of K3 surfaces.
We also thank the anonymous referees
for pointing out mistakes in the older version
and suggesting several improvements,
and Remke Kloosterman
for a further correction and a remark
(see \pref{rm:local_torelli}).
K.~H.~was partially supported by Grants-in-Aid for Scientific Research
(17K14156).
K.~U.~was partially supported by Grants-in-Aid for Scientific Research
(24740043,
15KT0105,
16K13743,
16H03930).

\section{An elliptic surface}
 \label{sc:surfaces}

Let $h \in \bC[x,y]$ be the homogeneous polynomial of degree 24
defined by homogeneous polynomials
$f, g \in \bC[z,w]$ of degree 8 and 12 respectively
as in \pref{eq:def_h}.
Let further $\Xbar$ be the hypersurface
of degree 24
defined by
\begin{equation} \label{eq:def_Xbar}
 y^2=-x^3+h
\end{equation}
in the weighted projective space
$\bP(8,12,1,1) = \Proj \bC[x,y,z,w]$,
where the variables $x$, $y$, $z$, and $w$
are of degree $8$, $12$, $1$, and $1$ respectively.
The variety $\Xbar$ has a quotient singularity
of type
$
 \bC^2 \left/ \la \frac{1}{4}(1,1) \ra \right.
$
at $[x:y:z:w] = [-1:1:0:0]$
coming from the ambient space.
%the singularity of the ambient weighted projective space.
The exceptional divisor $\sigma_0$
of the minimal resolution
$X \to \Xbar$
is a $(-4)$-curve.
The $zw$-projection $\pibar \colon \Xbar \dashrightarrow \bP^1$
induces an elliptic fibration $\pi \colon X \to \bP^1$
such that $\sigma_0$ is a section.

\begin{comment}
The minimal resolution can be obtained
by the blow-up
\begin{align}
 \bP \lb \cO_{\bP^1}(8) \oplus \cO_{\bP^1}(12) \oplus \cO_{\bP^1} \rb
  \to \bP(8,12,1,1)
\end{align}
of the ambient space.
$
 \bP \lb \cO_{\bP^1}(8) \oplus \cO_{\bP^1}(12) \oplus \cO_{\bP^1} \rb
$
is the GIT quotient of $\bA^3 \times \bA^2$
by the action of $(\bGm)^2$
with weight matrix
\begin{align}
\begin{pmatrix}
 8 & 12 & 0 & 1 & 1 \\
 1 & 1 & 1 & 0 & 0
\end{pmatrix},
\end{align}
and
$
 \bP(8,12,1,1)
$
is the GIT quotient of $\bA^4$
by the action of $\bGm$
with weight $(8,12,1,1)$.
The blow-up is given by
\begin{align}
 [p,q,r,s,t] \mapsto [x,y,z,w] = [pr^7,q r^{11},rs,rt]
  \sim [r^{-1} p, r^{-1} q, s, t]
\end{align}
in the homogeneous coordinate.
The exceptional locus of the blow-up is
the divisor $\{ r=0 \}$,
which goes to $\{z=w=0 \} \cong \bP(8,12)$
with generic stabilizer $\bZ/4 \bZ$.
$X$ is defined by
\begin{align}
 (q r^{11})^2 = - (p r^7)^{3} + h(rs, rt),
\end{align}
and the section $\sigma_0$ is defined by $r=0$.
\end{comment}

Let $e$ be a smooth fiber of $\pi$,
which is the total transform of a hyperplane section of $\Xbar$.
The sublattice $U$ of $\Pic X$
generated by $e$ and $\sigma_0$
is the hyperbolic unimodular lattice of rank 2.

\begin{comment}
One has
\begin{equation}
 K_X=\mathcal{O}_X( (24-1-1-8-12) e )=\mathcal{O}_X(2e).
\end{equation}
\end{comment}
Note that
$
 K_X=\mathcal{O}_X(2e).
$
Any section $\sigma$ of $\pi$ has self-intersection $-4$
since
\begin{equation}
 \cN_{\sigma/X}
  = \cO_\sigma \lb K_\sigma - K_X|_\sigma \rb
  = \cO_\sigma(-2-2)
  = \cO_\sigma(-4).
\end{equation}
If $\sigma$ does not intersect $\sigma_0$,
then one has
$
 \sigma^2 = -4,
$
$
 \sigma \cdot \sigma_0 = 0,
$
and
$
 \sigma \cdot e = 1,
$
%\begin{align}
% \sigma^2 &= -4, &
% \sigma \cdot \sigma_0 & = 0, &
% \sigma \cdot e &= 1,
%\end{align}
so that
\begin{align}
 \tau(\sigma) \coloneqq \sigma - \sigma_0 - 4 e
\end{align}
satisfies
$
 \tau(\sigma) \bot U
$
and
$
 \tau(\sigma)^2 = - 8.
$

\begin{comment}
\begin{align}
 \tau \cdot \sigma_0
  &= (\sigma - \sigma_0 - 4 e) \cdot \sigma_0 \\
  &= 0 + 4 - 4 \\
  &= 0, \\
 \tau \cdot e
  &= (\sigma - \sigma_0 - 4 e) \cdot e \\
  &= 1 - 1 - 0 \\
  &= 0, \\
 \tau \cdot \tau
  &= (\sigma - \sigma_0 - 4 e) \cdot \tau \\
  &= \sigma \cdot \tau \\
  &= \sigma \cdot (\sigma - \sigma_0 - 4 e) \\
  &= -4 + 0 - 4 \\
  &= -8.
\end{align}
\end{comment}

The elliptic fibration $\pi$ has six sections
\begin{align} \label{eq:six_sections}
\begin{split}
 \sigma_1&: (x,y)=(f,g), \\
 \sigma_2&: (x,y)=(\zeta_3 f,g), \\
 \sigma_3&: (x,y)=(\zeta_3^2 f,g), \\
 \sigma_4&: (x,y)=(f,-g), \\
 \sigma_5&: (x,y)=(\zeta_3 f,-g), \\
 \sigma_6&: (x,y)=(\zeta_3^2 f,-g)
\end{split}
\end{align}
disjoint from $\sigma_0$,
where
$
 \zeta_k \coloneqq \exp \lb 2 \pi \sqrt{-1}/k \rb
$
for a positive integer $k$.
Let $M$ be the sublattice of $\Pic X$
generated by $\tau_i \coloneqq \tau(\sigma_i)$
for $i = 1, \ldots, 6$.

\begin{lemma} \label{lm:M}
The pair $(\tau_1, \tau_2)$ is an ordered basis of $M$
with the Gram matrix
$\sqmat{-8}{4}{4}{-8}$.
\end{lemma}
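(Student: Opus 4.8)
The plan is to reduce the statement to an intersection computation on $X$ followed by a positivity argument. Expanding $\tau_i = \sigma_i - \sigma_0 - 4e$ and using $\sigma_i \cdot \sigma_0 = 0$, $\sigma_i \cdot e = 1$, $\sigma_0^2 = -4$, $\sigma_0 \cdot e = 1$ and $e^2 = 0$, one finds
\begin{align}
 \tau_i \cdot \tau_j = \sigma_i \cdot \sigma_j - 4
\end{align}
for all $i, j$, the value $\tau_i^2 = -8$ being the case $i = j$. It therefore suffices to compute the matrix $\lb \sigma_i \cdot \sigma_j \rb$ for the six sections in \pref{eq:six_sections}, to read off the $(\tau_1, \tau_2)$-block, and then to show that the $\tau_i$ generate the same rank-two lattice as $\tau_1$ and $\tau_2$.

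The heart of the matter is the evaluation of $\sigma_i \cdot \sigma_j$ for $i \ne j$, which I would carry out fibrewise from the explicit equations. Two of the six sections meet only over the base points where their $(x,y)$-coordinates coincide: if $\sigma_i$ and $\sigma_j$ share the sign of $y$ but differ in the cube root of unity multiplying $f$, this forces $f = 0$; if they share the multiple of $f$ but differ in the sign of $y$, it forces $g = 0$; and if they differ in both, it forces $f = g = 0$. Assuming $f$ and $g$ are general, so that each has simple zeros and the two have no common zero, the fibres over these points are smooth, and a local computation in a suitable coordinate $(z, x)$ or $(z, y)$ on $X$ shows the meeting is transverse at each point. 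Counting zeros ($8$ for $f$, $12$ for $g$, none in common) gives $\sigma_i \cdot \sigma_j = 8$, $12$, or $0$ in the three cases, hence $\tau_i \cdot \tau_j = 4$, $8$, or $-4$; in particular $(\tau_1, \tau_2)$ has Gram matrix $\sqmat{-8}{4}{4}{-8}$. I expect the main difficulty to lie here: one must check that there are no further contributions over $z = \infty$ or over singular fibres, and that the triple coincidence of $\sigma_1, \sigma_2, \sigma_3$ over each zero of $f$ (and of $\sigma_4, \sigma_5, \sigma_6$ over each zero of $g$) does not raise the pairwise multiplicities. Both are controlled by the genericity hypothesis together with the distinctness of the tangent directions at the coincidence points.

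Finally I would promote these numbers to the lattice statement. Since $U$ is unimodular one has $\Pic X = U \oplus U^{\bot}$, and by the Hodge index theorem $U^{\bot}$ is negative definite; as $M \subset U^{\bot}$, the pairing on $M$ is non-degenerate, so the rank of $M$ equals the rank of the matrix $\lb \tau_i \cdot \tau_j \rb$. Inspection of this matrix shows it has rank two, with $\tau_1$ and $\tau_2$ already linearly independent because $\det \sqmat{-8}{4}{4}{-8} = 48 \ne 0$. Hence $\tau_3, \ldots, \tau_6$ lie in $\bQ \tau_1 + \bQ \tau_2$, and solving $\tau_i \cdot \tau_1 = \lb a \tau_1 + b \tau_2 \rb \cdot \tau_1$ and $\tau_i \cdot \tau_2 = \lb a \tau_1 + b \tau_2 \rb \cdot \tau_2$, whose solution is forced to be unique by non-degeneracy, yields the integral relations $\tau_3 = -\tau_1 - \tau_2$, $\tau_4 = -\tau_1$, $\tau_5 = -\tau_2$ and $\tau_6 = \tau_1 + \tau_2$. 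These are exactly what the group law on the fibres predicts, since $\sigma_1, \sigma_2, \sigma_3$ lie on the horizontal line $y = g$ and so sum to zero, $\sigma_4, \sigma_5, \sigma_6$ lie on $y = -g$, and $\sigma_{i+3} = -\sigma_i$. Therefore $M = \bZ \tau_1 + \bZ \tau_2$, and $(\tau_1, \tau_2)$ is an ordered basis with the asserted Gram matrix.
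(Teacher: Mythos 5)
Your proposal is correct and takes essentially the same approach as the paper: both compute the intersection numbers $\sigma_i \cdot \sigma_j \in \{0, 8, 12\}$ from the explicit equations of the six sections and then invoke the negative definiteness of $U^\bot$ (Hodge index theorem) to pin down the relations among the $\tau_i$. The only cosmetic difference is that the paper extracts the relations more quickly by noting that $\tau_1 + \tau_4$ and $\tau_1 + \tau_2 + \tau_3$ are isotropic vectors in a negative definite lattice and hence vanish, whereas you compute the full $6 \times 6$ Gram matrix and solve for the coefficients using non-degeneracy, arriving at the same relations $\tau_4 = -\tau_1$, $\tau_3 = -\tau_1 - \tau_2$, etc.
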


\begin{proof}
It follows from
\begin{align}
 \sigma_1 \cdot \sigma_4 = \deg g = 12
\end{align}
that
\begin{align}
 (\tau_1 + \tau_4)^2 = 0
\end{align}
and hence
\begin{align}
 \tau_1 + \tau_4 = 0
\end{align}
since $U^\bot \subset \Pic X$ is negative definite
by the Hodge index theorem.
\begin{comment}
\begin{align}
 \tau_1 \cdot \tau_4
  &= \tau_1 \cdot (\sigma_4 - \sigma_0 - 4 e) \\
  &= \tau_1 \cdot \sigma_4 \\
  &= (\sigma_1 - \sigma_0 - 4 e) \cdot \sigma_4 \\
  &= 12 - 0 - 4 \\
  &= 8.
\end{align}
\begin{align}
 (\tau_1 + \tau_4)^2
  &= \tau_1^2 + 2 \tau_1 \cdot \tau_4 + \tau_4^2 \\
  &= - 8 + 16 - 8 \\
  &= 0.
\end{align}
\end{comment}
Similarly,
it follows from
\begin{align} \label{eq:sigma12}
 \sigma_1 \cdot \sigma_2
  = \sigma_2 \cdot \sigma_3
  = \sigma_1 \cdot \sigma_3
  = \deg f = 8
\end{align}
that
\begin{align}
 (\tau_1 + \tau_2 + \tau_3)^2 = 0
\end{align}
and hence
\begin{align}
 \tau_1 + \tau_2 + \tau_3 = 0.
\end{align}
\begin{comment}
\begin{align}
 \tau_1 \cdot \tau_2
  &= \tau_1 \cdot (\sigma_2 - \sigma_0 - 4 e) \\
  &= \tau_1 \cdot \sigma_2 \\
  &= (\sigma_1 - \sigma_0 - 4 e) \cdot \sigma_2 \\
  &= 8 - 0 - 4 \\
  &= 4.
\end{align}
\begin{align}
 (\tau_1 + \tau_2 + \tau_3)^2
  &= \tau_1^2 + \tau_2^2 + \tau_3^2
   + 2 (\tau_1 \cdot \tau_2 + \tau_2 \cdot \tau_3 + \tau_3 \cdot \tau_1) \\
  &= -8-8-8+2(4+4+4) \\
  &= 0.
\end{align}
\end{comment}
One also has $\tau_2 + \tau_5 = \tau_3 + \tau_6 = \tau_4 + \tau_5 + \tau_6 = 0$,
so that $\{ \tau_1, \tau_2 \}$ is a basis of $M$.
It also follows from \pref{eq:sigma12} that
\begin{align}
 \tau_1 \cdot \tau_2
  &= 4,
\end{align}
and \pref{lm:M} is proved.
\end{proof}

\section{The Picard number of $X$}
 \label{sc:rho}

Let
$
 \rho(X)
%  \coloneqq \rank \Pic X
$
be the Picard number of $X$.
We prove the following in this section:

\begin{proposition} \label{pr:Pic}
For very general $f$ and $g$,
one has $\rho(X) = 4$.
\end{proposition}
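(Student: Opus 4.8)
The inequality $\rho(X) \geq 4$ holds for every $(f,g)$: by \pref{lm:M} the lattice $M$ has rank $2$, it is orthogonal to $U$ because each $\tau_i \perp U$, and $e,\sigma_0$ span $U$ of rank $2$, so $U \oplus M \subseteq \Pic X$ has rank $4$. The whole content of the proposition is thus the reverse inequality for very general $(f,g)$, and I would prove it by an infinitesimal variation of Hodge structure argument. Since the locus of parameters with $\rho(X) > 4$ is a countable union of proper analytic subvarieties, the very general member realises the minimal Picard number in the family; it therefore suffices to show that the only rational $(1,1)$-classes remaining of type $(1,1)$ under all first-order deformations of $(f,g)$ are those in $(U \oplus M) \otimes \bQ$, equivalently that the space of primitive $(1,1)$-classes fixed to first order by the family is exactly $M \otimes \bQ$.

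To compute the relevant Hodge data I would use the Griffiths--Steenbrink description for the weighted hypersurface $\Xbar \subseteq \bP(8,12,1,1)$. The partial derivatives of $F = -x^3 + f^3 + g^2$ are $-3x^2$, $2y$, and, along $\{x=y=0\}$, the forms $\partial_z h$ and $\partial_w h$, so their common zero locus is the origin together with the multiple roots of $h$; hence for general $(f,g)$, where $h$ is squarefree, $\Xbar$ is quasi-smooth and the Jacobian ring $R = \bC[x,y,z,w]/J_F$ is Artinian Gorenstein. The sole quotient singularity of $\Xbar$ at $[-1:1:0:0]$ is of orbifold type and only affects the algebraic part --- the resolution $X \to \Xbar$ adds the class $\sigma_0 \in U$ --- so with $d = 24$ and $\sum a_i = 22$ the primitive Hodge pieces are $H^{2,0}_{\mathrm{prim}} \cong R_2$ (three-dimensional, spanned by $z^2, zw, w^2$), $H^{1,1}_{\mathrm{prim}} \cong R_{26}$, and $H^{0,2}_{\mathrm{prim}} \cong R_{50}$, with Macaulay duality giving a perfect pairing into the socle $R_{74}$.

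Under this dictionary the deformations arising from the family span the subspace
\begin{equation}
 V = 3 f^2 \cdot \bC[z,w]_8 + 2 g \cdot \bC[z,w]_{12} \subseteq R_{24},
\end{equation}
and a class $\bar\gamma \in R_{26} \cong H^{1,1}_{\mathrm{prim}}$ remains of type $(1,1)$ to first order along the whole family precisely when $\bar\gamma \cdot V = 0$ in $R_{50}$, the product being multiplication in $R$ (note $24 + 26 = 50$). The sections $\sigma_1, \dots, \sigma_6$, and hence the classes $\tau_1, \tau_2$, persist for all $(f,g)$, giving two elements $t_1, t_2 \in R_{26}$ annihilated by $V$. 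The heart of the proof is then the purely algebraic assertion that
\begin{equation}
 \{ \bar\gamma \in R_{26} : \bar\gamma \cdot V = 0 \} = \langle t_1, t_2 \rangle
\end{equation}
is two-dimensional; granting this, the primitive N\'eron--Severi group of the very general member has rank $2$, whence $\rho(X) = 2 + 2 = 4$. As a consistency check, via the Shioda--Tate formula $\rho = 4$ is equivalent to Mordell--Weil rank $2$ together with only irreducible (type II) fibres, which is exactly what the isotrivial fibration produces for general $(f,g)$.

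This annihilator computation is the main obstacle, and it is where the special form $h = f^3 + g^2$ enters decisively: $V$ is a proper subspace of $R_{24}$, far smaller than the full space of degree-$24$ deformations, which is why $M$ survives. To carry out the count I would exploit the isotriviality of $\pi$: because the coefficient of $x$ vanishes, every smooth fibre has $j$-invariant $0$ and $\pi$ carries the fibrewise order-$6$ automorphism $\alpha \colon (x,y) \mapsto (\zeta_3 x, -y)$, which permutes $\sigma_1, \dots, \sigma_6$ and splits $R$, and each of its graded pieces, into eigenspaces. The condition $\bar\gamma \cdot V = 0$ then decomposes eigenspace by eigenspace into smaller problems, which can moreover be interpreted through the Hodge structures of cyclic covers of $\bP^1$ branched at the $24$ zeros of $h$. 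The delicate point is that these branch points are not an arbitrary configuration but are constrained to be the zeros of $f^3 + g^2$, so one cannot simply invoke genericity of $24$ points; one must check that moving $(f,g)$ still varies the periods enough to force the annihilator down to $\langle t_1, t_2 \rangle$. I expect this constrained eigenspace dimension count, rather than any conceptual difficulty, to be the technical core of the argument.
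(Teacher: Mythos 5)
Your proposal is a program rather than a proof, and it is an entirely different program from the paper's. Its ``heart'' --- the claim that $\{\bar\gamma \in R_{26} : \bar\gamma \cdot V = 0\} = \langle t_1, t_2\rangle$ --- is exactly \pref{pr:Pic} restated in infinitesimal-variation language, and you explicitly defer it (``I expect this constrained eigenspace dimension count\dots to be the technical core''). Worse, as literally stated the claim is false. Since $\partial_x F = 3x^2$ and $\partial_y F = 2y$ lie in the Jacobian ideal, one has $R \cong \bC[x]/(x^2) \otimes \bC[z,w]/(h_z, h_w)$, where $\bC[z,w]/(h_z,h_w)$ is an Artinian complete intersection with socle in degree $23+23-2 = 44$. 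Hence the $x$-free part of $R_{50}$ vanishes (degree $50 > 44$), so the entire $19$-dimensional $x$-free summand of $R_{26}$ annihilates $V \subset R_{24}$: your annihilator is at least $21$-dimensional, not $2$-dimensional. (Relatedly, the socle of $R$ sits in degree $74 - 22 = 52$, not $74$; note $26+26 = 2+50 = 52$.) The correct infinitesimal Noether--Lefschetz criterion for a rational class $\gamma$ is orthogonality to $V \cdot R_2$ under the socle pairing $R_{26} \times R_{26} \to R_{52}$, which constrains only the $\zeta_3^2$-eigencomponent of $\gamma$ for the automorphism $\alpha$; one must then invoke rationality of $\gamma$ to couple the two eigencomponents and conclude. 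Even with this repair, the essential dimension count --- that $\{\gamma_1 \in \bC[z,w]_{18} : \gamma_1 \cdot V \subset (h_z, h_w)_{42}\}$ is one-dimensional for very general $f, g$ --- remains unproven, and it is not obviously accessible: the branch points are constrained to be zeros of $f^3 + g^2$, as you yourself note.

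The paper takes a completely different route that avoids any Jacobian-ring computation. It degenerates to the explicit two-parameter subfamily $f = aw^8$, $g = w(z^{11} + bw^{11})$, which carries a $\bZ/33\bZ$-action; since the $20$-dimensional representation $V$ with primitive $33$rd roots of unity as eigenvalues is irreducible over $\bQ$, and since non-isotriviality plus local Torelli rules out $V^{\oplus 1}$, the transcendental cohomology of a general member $Y$ contains $V^{\oplus 2}$, forcing $\rho(Y) \le 46 - 40 = 6$. Two lattice-theoretic exclusions then finish: $\rho(X) = 6$ would transport a $(-2)$-class $d$ from the type I\!V fiber of $Y$ to $X$, where Riemann--Roch ($\chi(\cO_X(d)) = 3$) makes $d$ or $2e - d$ effective, contradicting irreducibility of the fibers of $X$; and $\rho(X) = 5$ would produce an $\alpha$-invariant section, i.e., a polynomial square root $\psi$ of $h$, contradicting very generality. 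Your lower bound $\rho(X) \ge 4$ via $U \perp M$ and your Shioda--Tate consistency check are correct and consistent with the paper, but the upper bound --- the entire content of the proposition --- is missing from your argument.
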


\begin{proof}
Let $\cY$ be the family of elliptic surfaces over $\Spec \bC[a,b]$
obtained as the simultaneous minimal resolution
of the quotient singularity coming from the ambient space
of the family of hypersurfaces of $\bP(8,12,1,1)$
defined by \pref{eq:def_h}, \pref{eq:def_Xbar}, and
\begin{align}
 f = a w^8, \quad
 g = w \lb z^{11} + b w^{11} \rb
\end{align}
as in the beginning of \pref{sc:surfaces}.
One has
\begin{align}
 h
  = f^3 + g^2
  = w^2 \lb z^{11} - a' w^{11} \rb \lb z^{11} - b' w^{11} \rb,
\end{align}
where $a', b' \in \bC$ are defined by
\begin{equation}
 a' + b' = -2 b, \quad a' b' = a^3 + b^2.
\end{equation}
The discriminant of the elliptic fibration
is given by $h^2$,
which is a configuration of (not necessarily distinct) 24 double points.
The family $\cY$ is not isotrivial,
since the configuration depends on the parameter,
even after quotienting out the $\PGL(2, \bC)$-action.
A general member $Y$ of this family
has a singular fiber of Kodaira type I\!V at $[z:w]=[1:0]$,
consisting of three lines meeting at one point,
so that one has
\begin{align} \label{eq:rho_Y}
 \rho(Y) \geq 4 + 2 = 6.
\end{align}
Since the discriminant is of degree 48,
 the topological Euler number of $Y$ is 48,
 which implies that the second Betti number of $Y$ is given by
$
 48 - 2 = 46.
$
Since $Y$ is a weighted projective hypersurface,
the Griffiths--Dwork method
(see e.g. \cite{MR704986})
shows
\begin{align}
 H^{2,0}(Y) = \bigoplus_{i=0}^{2} \bC \Omega_i,
\end{align}
where
\begin{align}
 \Omega_i = \Res_Y \lb z^{i} w^{2-i}
  \frac{8 x d y \wedge d z \wedge d w - 12 y d x \wedge d z \wedge d w + \cdots}
  {x^3+y^2-h} \rb
\end{align}
for $i = 0, 1, 2$.
Hence the $\bZ/33\bZ$-action generated by
\begin{align}
 \alpha &\colon [x:y:z:w] \mapsto [\zeta_3 x:y:z:w], \\
 \beta &\colon [x:y:z:w] \mapsto [x:y:\zeta_{11} z:w]
\end{align}
satisfies
\begin{align} \label{eq:action_Omega}
 \alpha^* \Omega_i = \zeta_3 \Omega_i, \quad
 \beta^* \Omega_i = \zeta_{11}^{i+1} \Omega_i.
\end{align}
Let $V$ be the irreducible representation of $\bZ / 33 \bZ$
over $\bQ$ with eigenvalues $\lc \zeta_{33}^j \rc_{(j,33)=1}$.
One has
$
 \dim V = \phi(33) = 20,
$
where $\phi$ is Euler's totient function.
It follows from \pref{eq:action_Omega}
that $H^2(Y,\bQ)$ contains $V^{\oplus k}$ for some $k \geq 1$.
If $k=1$, then $H^{2,0}(Y)$ does not depend on the parameters $a$ and $b$,
which contradicts the non-isotriviality of the family $\cY$
and the local Torelli theorem for elliptic surfaces
 \cite{MR0499325,MR756851,MR721378,MR2043402}.
% (see \pref{rm:local_torelli}).
Hence one has $k=2$,
which implies
\begin{equation} \label{EQ_pic-Y}
 \rho(Y)
  \le 46 - \dim V^{\oplus 2}
  = 6,
\end{equation}
so that $\rho(Y) = 6$
for very general $Y$.

Now let $X$ be the elliptic surface
defined by very general $f$ and $g$,
so that any singular fiber is of type I\!I and $\rho(X)\leq 6$.
(For example, if we put $f=z^8$, $g=w^{12}$,
then we have $h=z^{24}+w^{24}$.
This implies that all roots of $h$ are pairwise distinct
for general $f$ and $g$.)
Assume $\rho(X)=6$ for a contradiction.
Then there is a deformation of $X$ to $Y$ such that
 $H^{2,0}(X) \subset V^{\oplus 2} \otimes \bC$
 under the induced identification
 $H^2(X;\bZ)=H^2(Y;\bZ)$,
 which implies $\Pic X = \Pic Y$.
Let $d$ be a $(-2)$-vector in $\Pic Y$
coming from the singular fiber of type I\!V.
The Riemann--Roch theorem shows
\begin{align}
 \chi(\cO_X(d))
  &\coloneqq h^0(\cO_X(d))-h^1(\cO_X(d))+h^2(\cO_X(d)) \\
  &= \frac{1}{2}d.(d-K_X)+\frac{1}{12}(K_X^2+c_2(X)) \\
  &= \frac{1}{2}d^2+\frac{1}{12} c_2(X) \\
  &= -1 + \frac{48}{12} \\
  &=3,
\end{align}
so that either $d$ or $K_X - d = - d + 2 e$ is effective.
Since $d$ is orthogonal to $e$
and every singular fiber of $X$ is irreducible,
the divisor $d$ must be a multiple of $e$,
which contradicts $d^2 = -2$ and $e^2 = 0$.

Next assume for a contradiction that $\rho(X)=5$.
Then $\Pic X \otimes \bQ$ is generated over $\bQ$ by
$U$, $M$ and $\delta$ with $\alpha^* \delta=\delta$.
By the theory of elliptic surfaces \cite{MR1081832}, the Mordell--Weil group of $X$ is
isomorphic to
\begin{equation}
\Pic(X)/(U + (\text{the lattice generated by irreducible components of singular fibers})).
\end{equation}
Hence the class $\delta$ corresponds to an $\alpha$-invariant section
different from $\sigma_0$.
Such a section is given by $x = 0$ and $y = \psi$
for a homogeneous polynomial $\psi$ in $z$ and $w$
of degree 12
satisfying $\psi^2 = h$.
The existence of a square root $\psi$ of $h$ contradicts the assumption
that $f$ and $g$ are very general,
and $\rho(X) = 4$ is proved.
\end{proof}

\begin{remark} \label{rm:local_torelli}
In the published version of this paper,
the authors have cited only \cite{MR0499325,MR756851,MR721378}
for the local Torelli theorem for elliptic surfaces.
Kloosterman pointed out that
the results in these papers are not strong enough
to deduce the local Torelli theorem for the family $\cY$,
but \cite[Theorems 1.1 and 3.3]{MR2043402} are.
He also pointed out that \pref{th:main} follows from the proof of
\cite[Proposition 2.1]{MR1971517}.
\end{remark}

\section{Proof of the main theorem}
 \label{sc:proof}

We prove \pref{th:main} in this section.
We first prove the uniqueness of the decomposition
for very general $f$ and $g$.
For any elements $\varphi$ and $\psi$
in $\bC[z,w]$
of degrees 8 and 12
satisfying
\begin{equation}
 h = \varphi^3 + \psi^2,
\end{equation}
the section $\sigma$ defined by $(x,y)=(\varphi,\psi)$
does not intersect $\sigma_0$,
so that $\tau(\sigma) \in U^\bot \subset \Pic X$.
For very general $f$ and $g$,
one has $\rho(X) = 4$,
and hence $\tau(\sigma) \in M \otimes \bQ$.
%$U^\bot$ is an overlattice of $M$ of finite index.
Recall that the Gram matrix
of the ordered basis $(\tau_1, \tau_2)$ of $M$
is $\sqmat{-8}{4}{4}{-8}$.
By a direct computation,
one can see that
there are no elements $\rho \in M \otimes \bQ$
such that $\rho.M\subset \bZ$ and $\rho^2=-8$
other than $\tau_1, \ldots, \tau_6$,
and the uniqueness of the decomposition follows from the fact that
the Mordell--Weil group of $X$ is naturally isomorphic to
$\Pic(X)/U\cong (M \otimes \bQ) \cap \Pic(X)$.

In order to prove the uniqueness of the decomposition
for general $f$ and $g$,
let $S$ and $T$ be the subspaces of $\bC[z,w]$
consisting of homogeneous polynomials of degrees $8$ and $12$
respectively, and
define a subscheme $Z$ of $(S \times T)^2$ by
\begin{equation}
 Z \coloneqq \lc ((f,g),(\varphi,\psi)) \in (S \times T)^2 \relmid
 f^3+g^2=\varphi^3+\psi^2 \rc.
\end{equation}
The uniqueness of the decomposition for very general $f$ and $g$ implies that
the first projection
$
 Z \to S \times T,
$
which is a morphism of schemes,
is generically six-to-one.
Hence the first projection is six-to-one
outside of a Zariski closed subset,
and \pref{th:main} is proved.

\section{Mirror symmetry}
 \label{sc:mirror}

%In this section,
%we discuss the relation with \pref{th:main}
%and mirror symmetry for K3 surfaces.
It is conjectured by Strominger, Yau, and Zaslow
\cite{MR1429831}
that any Calabi--Yau manifold has a special Lagrangian torus fibration,
and the mirror manifold is obtained as the dual special Lagrangian torus fibration.
This picture has been refined in \cite{MR1863732,MR1882331}
%partly motivated by the study of special Lagrangian torus fibrations
%(see \cite{MR2032503} and references therein),
to the conjecture that the Calabi--Yau metric
with the diameter normalized to one
converges in the Gromov--Hausdorff topology
to a Monge--Amp\`ere manifold with singularities
as one approaches a large complex structure limit.
Here, a \emph{Monge--Amp\`ere manifold with singularities}
is a manifold $B$
with a subset $B^\sing$ of Hausdorff codimension 2
such that $B \setminus B^\sing$
has a tropical affine structure
(i.e., an atlas whose transformation functions
are in $\GL_n(\bZ) \ltimes \bR^n$)
and a Monge--Amp\`ere metric
(i.e., a Riemannian metric
of Hessian form
%\begin{align}
$
 g_{ij} = \frac{\partial^2 K}{\partial x^i \partial x^j}
$
%\end{align}
in the affine coordinate
satisfying
$
 \det \lb \lb g_{ij} \rb_{i,j} \rb = \text{constant}
$).
A Monge--Amp\`ere manifold comes
with a dual pair of affine structures,
and mirror symmetry should interchange them.

In the case of a K3 surface,
a special Lagrangian torus fibration can be turned
into an elliptic fibration by a hyperK\"ahler rotation.
It is shown in \cite{MR1863732,MR3514555}
that a sequence of Calabi--Yau metrics
on an elliptic K3 surface
with a fixed diameter
converges
in the Gromov--Hausdorff topology
to a sphere with a Monge--Amp\`ere structure with singularities
as the volume of the fiber goes to zero.
The limit sphere $B$ can naturally be identified
with the base $\bP^1$ of the elliptic K3 surface,
and the discriminant $B^\sing$
of the Monge--Amp\`ere structure
can be identified with the discriminant
of the elliptic K3 surface.
Under this identification,
the Monge--Amp\`ere metric $g$ on $B \setminus B^\sing$
is written as
\begin{align} \label{eq:MAS2}
 g = \Im \lb \taubar_1 \tau_2 \rb d z \otimes d \zbar,
\end{align}
where $z$ is the holomorphic local coordinate on the base $\bP^1$
and $(\tau_1, \tau_2)$ are the periods,
along a symplectic basis,
of the relative holomorphic one-form $\lambda$
on the elliptic fibration
dual to $d z$ with respect to the holomorphic volume form of the K3 surface.
It follows that the complex structure of the base $\bP^1$
minus the discriminant
%and hence the configuration of discriminant
can be reconstructed from the limit metric,
up to the choice of an orientation.
Note that the metric \pref{eq:MAS2} depends only on the Jacobian fibration,
so that one can assume that the elliptic K3 surface has a section.
It follows from \pref{th:main} that a general elliptic K3 surface with a section
can be reconstructed from the limit metric
up to complex conjugation.

\bibliographystyle{amsalpha}
\bibliography{bibs}

\end{document}